%
%
%
%
%
%
%

\documentclass[a4paper, 11pt]{amsart}
\setlength{\evensidemargin}{0.3in}
\setlength{\oddsidemargin}{0.3in}
\setlength{\textwidth}{5.7in}

\usepackage{enumerate}
\usepackage{mathrsfs}
\usepackage{amssymb,amsmath}
\usepackage[all]{xy}

\title[]{On numerical Newton-Okounkov bodies and the existence of Minkowski bases}
\author{William F. Sawin and David Schmitz}

\thanks{The first author was supported by NSF grant DGE-1148900. The second author was supported by DFG research fellowship SCHM 3223/1-1}

\keywords{Newton--Okounkov body, Minkowski decomposition}
\subjclass[2000]{14C20}

\address{David Schmitz,
	Fach\-be\-reich Ma\-the\-ma\-tik und In\-for\-ma\-tik,
	Philipps-Uni\-ver\-si\-t\"at Mar\-burg,
	Hans-Meer\-wein-Stra{\ss}e,
	D-35032~Mar\-burg, Germany.}
\email{schmitzd@mathematik.uni-marburg.de}

\address{William F. Sawin,
Department of Mathematics,
Princeton university,
Princeton,
NJ 08544}
\email{wsawin@math.princeton.edu}


\newcommand{\R}{\mathbb{R}}
\newcommand{\N}{\mathbb{N}}

\renewcommand{\phi}{\varphi}

\newcommand{\num}{\scriptsize\mbox{num}}

\newcommand{\Eff}{\overline{\mbox{Eff}}}

\newcommand{\vol}{\mbox{vol}}

\renewcommand{\to}{\longrightarrow}

\renewcommand{\epsilon}{\varepsilon}
\newcommand{\Deltanum}{\Delta_{Y_\bullet}^{\num}}

\newtheorem{prop}{Proposition}
\newtheorem{lemma}[prop]{Lemma}

\newtheorem{introthm}{Theorem}

\theoremstyle{definition}
\newtheorem{defin}[prop]{Definition}
\newtheorem{notation}[prop]{Notation}

\newtheorem{remark}[prop]{Remark}

\begin{document}
 
\begin{abstract}
Towards the boundary of the big cone, Newton--Okounkov bodies do not vary continuously and in fact the body of a boundary class is not well defined. Using the global Okounkov body one can nonetheless define a numerical invariant, the numerical Newton--Okounkov body. We show that if a normal projective variety has a rational polyhedral global Okounkov body, it admits a Minkowski basis provided one includes numerical Newton--Okounkov bodies above non-big classes. Under the same assumption, we also show that the dimension of the numerical Okounkov body is the numerical Kodaira dimension. 
\end{abstract}

\maketitle

\section*{Introduction}

In \cite{ssbs} the authors prove that the existence of a Minkowski basis 
for Newton-Okounkov bodies of a normal projective variety $X$ with respect to an
admissible flag $Y_\bullet$ implies rational polyhedrality of the global Okounkov body
$\Delta_{Y_\bullet}(X)$. It remained open whether the converse is also true. In this note, we 
prove that this is indeed the case if we do not require all the the building blocks of Newton--Okounkov bodies to be Newton--Okounkov bodies themselves. Concretely, we prove:
\begin{introthm}\label{ThA}
	Let $X$ be a normal projective variety of dimension $n$. 
	If $\Delta_{Y_\bullet}(X) \subset \R^n\times N^1(X)_\R$ is rational polyhedral, 
	then there exist pseudo-effective 	divisors $D_1,\dots,D_r$ on 
	$X$ together with rational polytopes $\Delta_1,\dots,\Delta_r\subset\R^n$
	such that for any big divisor $D$ there exist non-negative numbers
	$\alpha_1,\dots,\alpha_r$ such that
	\begin{itemize}
		\item $D=\sum\alpha_iD_i$, and
		\item $\Delta_{Y_\bullet}(D) = \sum \alpha_i\Delta_i$.
	\end{itemize}
\end{introthm}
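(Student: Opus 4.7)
The plan is to subdivide the pseudo-effective cone $\Eff(X)=\pi(\Delta_{Y_\bullet}(X))$ into a rational simplicial fan $\Sigma$ on whose cones the fibers of $\pi$ vary Minkowski-linearly in the base, and then to take the primitive generators of the rays of $\Sigma$ as the classes $[D_i]$ together with their corresponding fibers as the polytopes $\Delta_i$. That $\Eff(X)$ itself is rational polyhedral is immediate, being the linear projection of the rational polyhedral cone $\Delta_{Y_\bullet}(X)$.

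The heart of the argument is a Minkowski-linearity statement: after writing $C:=\Delta_{Y_\bullet}(X)=\{(v,\xi):Av+B\xi\leq 0\}$ in terms of rational facet inequalities, each fiber $C_\xi:=\{v:Av\leq -B\xi\}$ is a polytope with fixed facet normals (the rows of $A$) and offsets depending linearly on $\xi$. The subdivision of $\Eff(X)$ into locally closed chambers on which the face lattice of $C_\xi$ is constant is rational polyhedral; it arises as the common refinement of the projections $\pi(F)$ of faces $F$ of $C$. On each such chamber the normal fan of $C_\xi$ in $(\R^n)^*$ is constant, while each vertex $v_T(\xi)$ of $C_\xi$, obtained by solving the linear system associated with an active facet set $T$, depends linearly on $\xi$; hence the support function $h_{C_\xi}(w)$ is linear in $\xi$ for every direction $w$. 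Since Minkowski sums of polytopes correspond to sums of support functions, this yields $C_{\sum\alpha_i\xi_i}=\sum\alpha_i C_{\xi_i}$ whenever the $\xi_i$ lie in a common chamber and $\alpha_i\geq 0$. A rational simplicial refinement $\Sigma$, obtained by stellar subdivisions that only introduce new rays inside existing chambers, preserves this Minkowski-linearity.

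With $\Sigma$ constructed, the plan is to assign to each ray $\rho\in\Sigma^{(1)}$ a primitive rational generator $\xi_\rho\in N^1(X)_\Q$, a pseudo-effective $\Q$-divisor $D_\rho$ of class $\xi_\rho$, and the rational polytope $\Delta_\rho:=C_{\xi_\rho}$. For a big divisor $D$, its class lies in some simplicial cone $\sigma\in\Sigma$ with rays $\rho_1,\dots,\rho_d$; the unique barycentric expression $[D]=\sum_i\alpha_i\xi_{\rho_i}$ has $\alpha_i\geq 0$, which gives $D=\sum_i\alpha_i D_{\rho_i}$ in $N^1(X)_\R$ and, by the Minkowski-linearity above, $\Delta_{Y_\bullet}(D)=\sum_i\alpha_i\Delta_{\rho_i}$. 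Extending by $\alpha_\rho=0$ on the remaining rays yields the full basis $\{(D_\rho,\Delta_\rho)\}_{\rho\in\Sigma^{(1)}}$.

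The hard part should be the Minkowski-linearity claim itself, especially at chambers meeting $\partial\Eff(X)$ where $C_\xi$ may drop in dimension; there one needs to track carefully how vertices, or equivalently the active facet sets, degenerate and to preserve the rationality of the chamber structure. The other ingredients --- rationality of the coarse chamber decomposition, existence of a rational simplicial refinement, and assembly of the basis --- should follow from standard rational polyhedral geometry.
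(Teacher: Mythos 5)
Your proposal is correct and reaches the same conclusion, but the route is genuinely different from the paper's. The paper works directly with the projected fan $pr_{2*}\Delta_{Y_\bullet}(X)$ (the common refinement of images of faces) without ever passing to a simplicial refinement; it proves the Minkowski-linearity on a chamber by an argument about \emph{extremal points}: for an extremal point $x$ of the fiber over $aD_1+bD_2$, the minimal face $F$ of $\Delta_{Y_\bullet}(X)$ through $x$ must project onto the whole chamber, so it contains lifts $y_1,y_2$ of $D_1,D_2$ with $ay_1+by_2=x$. It then uses an \emph{induction on the dimension of the minimal chamber} containing $D$, slicing with a generic line to reduce to boundary points, which lets it take as Minkowski basis the rays of the (non-simplicial) projected fan without any subdivision. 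You instead prove linearity via support functions and fixed normal fans and then pass to a rational simplicial refinement to get unique barycentric coordinates. Your version is somewhat more explicit (the decomposition is uniquely determined on each simplicial cone) at the cost of a larger basis, since stellar subdivision introduces extra rays; the paper's version keeps the basis minimal.

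One small imprecision worth flagging: ``the face lattice of $C_\xi$ is constant'' is slightly weaker than what you need. The combinatorial face lattice can stay constant while the \emph{active facet set} at a vertex changes (e.g. $C_\xi=[0,\min(\xi_1,\xi_2)]$, a segment for all $\xi$ in the open quadrant), in which case the support function is only piecewise linear. What you should ask for is that on each chamber the map from active facet subsets $T$ to vertices $v_T(\xi)$ is constant, equivalently that the normal fan of $C_\xi$ is literally constant (not just combinatorially). The common refinement of the $\pi(F)$ over all faces $F$ of $\Delta_{Y_\bullet}(X)$ does achieve this --- this is exactly the content of the paper's Lemma on pairs --- so the argument is sound once the condition is stated at the right level of precision.
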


Our method of proof is to first give a chamber decomposition of the pseudoeffective cone such that $\Delta_{Y_{\bullet}}(D)$ varies linearly with $D$ in each chamber. We show that, as long as $D_1,D_2$ lie in the same chamber, we have $\Delta_{Y_{\bullet}}(a D_1 + b D_2) = a \Delta_{Y_{\bullet}}(D_1) + b \Delta_{Y_{\bullet}}(D_2)$. We conclude that the extremal rays of the chambers form the set of divisors in a Minkowski basis for $X$ in the above sense, where for the polytopes $\Delta_i$ we pick the fibers over the classes
$[D_i]$ in the global Okounkov body $\Delta(X)$, so in case $D_i$ is big we have 
$\Delta_i=\Delta_{Y_\bullet}(D_i)$. In any case, following S\'ebastien Boucksom 
(cf. \cite{Bou}) we call the fiber over $[D]$ of a pseudo-effective
divisor $D$ in the global Okounkov body the \emph{numerical Newton--Okounkov body} and denote it by 
$\Deltanum(D)$. 

The second result of this note deals with the numerical Okounkov bodies defined above in the non-big case. We show that in our setting they still encode important information about the corresponding linear series. Since different representatives of a pseudo-effective non-big numerical class can have essentially  different mapping behavior, the usual Kodaira dimension is not well defined for such a class. To remedy this pathology, many notions of numerical dimension have been introduced and studied (see \cite{n04}, \cite{bdpp}, \cite{lehmann}, \cite{eckl}). It turns out that many of those notions agree (see \cite{lehmann}, and \cite{eckl}). In Definition \ref{def:nu}  we follow \cite[Definition 0.3]{eckl}, setting the numerical Kodaira dimension $\nu(D)$ of a pseudo-effective divisor $D$ to be
$$
	\nu(D)=\nu_{\mbox{\scriptsize Vol}}(D)= \max\left\{ k\in \N\mid \exists\  C\in \R:
		\vol_X(D-tA)>t^{\dim X-k}  \right\}
$$
for an ample divisor $A$.

Using this definition we prove the following.
\begin{introthm}\label{ThB}
	Let $D$ be a pseudo-effective divisor on a normal projective variety $X$ and 
	$Y_\bullet$ an admissible flag such that the global Okounkov body 
	$\Delta_{Y_\bullet}(X)$ is locally polyhedral around the fiber over $D$. Then 
	$$
		 \dim\Deltanum(D) = \nu(D).
	$$
\end{introthm}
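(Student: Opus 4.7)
My plan is to reduce the theorem to a polyhedral volume asymptotic via the Lazarsfeld--Mustata formula $\vol(D')=n!\,\vol_n(\Delta_{Y_\bullet}(D'))$ for big $D'$. Fix an ample divisor $A$ and set $v_t:=[D+tA]$, so $v_t$ is big for every $t>0$. Write $P=\Delta_{Y_\bullet}(X)$, let $F_t$ be the fiber of $P$ over $v_t$, and put $k:=\dim F_0=\dim\Deltanum(D)$. The key step is to establish the two-sided bound
$$
	c_0\,t^{n-k}\le\vol_n(F_t)\le c_1\,t^{n-k}
$$
for constants $0<c_0\le c_1$ and all sufficiently small $t>0$. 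Combined with Lazarsfeld--Mustata this gives $\vol(D+tA)\asymp t^{n-k}$, and the definition of $\nu$ then immediately yields $\nu(D)=k=\dim\Deltanum(D)$.

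The polyhedral estimate is local. By hypothesis $P$ is cut out by finitely many linear inequalities $a_i\cdot x+b_i\cdot v\ge c_i$ in a neighborhood $U$ of $F_0$; Hausdorff continuity of polyhedral fibers places $F_t\subset U$ for $t$ small. Partition the inequalities into the \emph{tight} set $T$ (equalities throughout $F_0$) and the loose rest, and choose coordinates $x=(y,z)\in\mathrm{aff}(F_0)\oplus\mathrm{aff}(F_0)^\perp$ with $F_0\subset\{z=0\}$. Since tight inequalities are constant on the $k$-dimensional set $F_0$, they have vanishing $y$-component and reduce to $a_i^z\cdot z\ge -t\,b_i\cdot[A]$; the scaling $z\mapsto z/t$ identifies their solution set with $t\cdot Q_1$, where $Q_1:=\{z:a_i^z\cdot z\ge -b_i\cdot[A],\ i\in T\}$. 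The boundedness of $F_t$ in $\R^n$ forces $Q_1$ to be bounded (no nonzero $z$-direction can lie in all tight recession cones without lying in the trivial recession cone of $F_t$), and the $n$-dimensionality of $F_t$ forces $Q_1$ to be $(n-k)$-dimensional. For the lower bound, fix $y_0\in\mathrm{relint}(F_0)$ and a small ball $B\subset F_0$ around $y_0$; the loose constraints have uniform slack on $B\times\{|z|\le\epsilon\}$, so for $t$ small the product $B\times tQ_1$ lies inside $F_t$, giving $\vol_n(F_t)\ge\vol_k(B)\,\vol_{n-k}(Q_1)\,t^{n-k}$. The upper bound follows because Hausdorff continuity confines $F_t$ to a slab of thickness $O(t)$ around $F_0$, whose $n$-dimensional volume is at most $O(t^{n-k})$.

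I expect the polyhedral step to be the main technical obstacle. The tight/loose dichotomy and the orthogonal coordinate decomposition are standard once one has local polyhedrality of $P$ around $F_0$, but verifying that $Q_1$ is a genuine bounded full-dimensional $(n-k)$-polytope requires combining the boundedness of $F_t$ (a polytope) with its $n$-dimensionality (from bigness of $v_t$). Once $Q_1$ is under control, the matching upper and lower bounds reduce to a direct Fubini/Hausdorff computation, and the reduction to volume asymptotics via Lazarsfeld--Mustata closes the proof.
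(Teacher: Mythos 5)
Your proof is substantively correct and converges on the same intermediate target as the paper's -- the two-sided estimate $\vol(D+tA)\asymp t^{\,n-\dim\Deltanum(D)}$ via Lazarsfeld--Musta\c{t}\u{a} -- but the route through polyhedral geometry is genuinely different. The paper obtains the upper inclusion $\Delta_{Y_\bullet}(D+tA)\subseteq\Deltanum(D)+\Delta_{tC_2}$ from an abstract lemma on locally polyhedral cones, bounding the ratio $d_N(x,f^{-1}(R))/d_n(f(x),R)$, and obtains the lower inclusion from convexity of the global body together with the K\"uronya--Lozovanu fact that the Newton--Okounkov body of an ample divisor contains a full-dimensional simplex. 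You instead analyze the finitely many defining inequalities near $F_0$ directly, splitting them into those tight on $F_0$ and the rest; after rescaling by $t$ the tight system produces a fixed polytope $Q_1$, and $F_t$ is pinched between $B\times tQ_1$ and a slab $\{z\in tQ_1\}$. This is more self-contained -- in particular you replace the simplex-inside-ample input by the elementary observation that $F_t$ is $n$-dimensional because $D+tA$ is big -- at the cost of somewhat more coordinate bookkeeping.

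The one step that needs repair is the boundedness of $Q_1$. Your parenthetical appeal to the \emph{trivial recession cone of $F_t$} does not work as stated: $F_t$ is cut out by the loose constraints as well, so its recession cone is trivial no matter what the tight system looks like, and nothing about $\operatorname{rec}(Q_1)$ follows. The correct argument is local on $F_0$: fix $y_0\in\operatorname{relint}(F_0)$; the loose constraints have strict slack at $(y_0,0,[D])$, so for $|z|$ small membership of $(y_0,z,[D])$ in $\Delta_{Y_\bullet}(X)$ is governed by the tight constraints alone, namely $a_i^z\cdot z\ge 0$ for $i\in T$. Hence $\operatorname{rec}(Q_1)\cap\{|z|<\epsilon\}$ consists of $z$'s with $(y_0,z)\in F_0\subseteq\{z=0\}$, forcing $\operatorname{rec}(Q_1)=\{0\}$, i.e.\ $Q_1$ is bounded. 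Once this is in place you do not need a separate Hausdorff-continuity input for the upper bound: the tight constraints already place the $z$-coordinate of every point of $F_t$ inside $tQ_1$, a set of diameter $O(t)$, while the loose constraints keep the $y$-range bounded for small $t$, and Fubini then gives $\vol_n(F_t)=O(t^{n-k})$.
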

Boucksom proved that the inequality
$$
	\dim\Deltanum(D) \le \nu(D)
$$
holds in general. In fact, dropping the polyhedrality condition, our proof 
still yields this general inequality, by a slightly more explicit version of Boucksom's proof. 

%

\section{Existence of a  Minkowski Basis}

Let us recall some results from convex geometry. For a polyhedron $\Delta$ and a point $x$ in $\Delta$ the minimal face containing $x$ is just the intersection of all faces of $\Delta$ containing $x$. It has the following intrinsic description:
\begin{lemma}
There is exactly one face $F$ of $\Delta$ containing $x$ in its interior, i.e., such that for each $y$ in $F$, there is some $\epsilon>0$ such that $x - \epsilon (y-x)\in F$. This is the minimal face containing $x$.
\end{lemma}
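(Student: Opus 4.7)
The plan is to define $F$ as the minimal face of $\Delta$ containing $x$, which exists as a face because the intersection of any family of faces of a polyhedron is again a face. I then verify existence (that $x$ lies in the interior of this $F$ in the stated sense) and uniqueness (that no other face qualifies) as separate steps.

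For existence, I work with the inequality presentation $\Delta = \{z : \lambda_j(z) \le b_j,\ j \in J\}$ and set $S_x = \{j \in J : \lambda_j(x) = b_j\}$, the set of constraints active at $x$. The set
\begin{equation*}
G = \{z \in \Delta : \lambda_j(z) = b_j \text{ for all } j \in S_x\}
\end{equation*}
is a face of $\Delta$ containing $x$, and every face of $\Delta$ containing $x$ has the form $\{z \in \Delta : \lambda_j(z) = b_j,\ j \in S\}$ for some $S \subseteq S_x$, hence contains $G$. So $F = G$. To verify the interior condition, fix $y \in F$ and set $z_\epsilon = x - \epsilon(y - x)$. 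For $j \in S_x$ we have $\lambda_j(x) = \lambda_j(y) = b_j$, so $\lambda_j(z_\epsilon) = b_j$ for every $\epsilon$; for $j \notin S_x$ we have $\lambda_j(x) < b_j$, so by continuity $\lambda_j(z_\epsilon) < b_j$ for $\epsilon$ small. Choosing $\epsilon$ small enough to handle the finitely many $j \notin S_x$ simultaneously, $z_\epsilon \in F$.

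For uniqueness, suppose $F'$ is any face with $x$ in its interior in the given sense. Minimality gives $F \subseteq F'$. Conversely, for $y \in F'$ the hypothesis supplies $\epsilon > 0$ with $z_\epsilon = x - \epsilon(y-x) \in F' \subseteq \Delta$, so $x = \tfrac{\epsilon}{1+\epsilon} y + \tfrac{1}{1+\epsilon} z_\epsilon$ lies in the relative interior of the segment from $y$ to $z_\epsilon$, which is contained in $\Delta$. Since $F$ is a face and $x \in F$, the standard face property (both endpoints of a segment in $\Delta$ whose relative interior meets $F$ must lie in $F$) forces $y \in F$; hence $F' \subseteq F$ and the two faces coincide.

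The only real obstacle I anticipate is keeping straight the two complementary descriptions of a face: the active-constraint description, which makes existence immediate, and the segment-closure characterization, which drives uniqueness. Both are classical, and once they are in hand the remainder is routine bookkeeping with the defining inequalities of $\Delta$.
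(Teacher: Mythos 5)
The paper states this lemma without proof, as recalled background from convex geometry, so there is no argument in the paper to compare against. Your proof is correct and is the standard one: you identify the minimal face $F$ with the face $F_{S_x}$ cut out by the constraints active at $x$, verify the relative-interior property by checking that moving slightly past $x$ away from any $y \in F$ keeps the active constraints tight (since they are tight on all of $F$) and the inactive ones strict (by continuity and finiteness), and establish uniqueness by combining minimality of $F$ with the defining property of faces applied to the segment $[y, x - \epsilon(y-x)]$. One small imprecision worth flagging: you assert that every face of $\Delta$ containing $x$ ``has the form'' $\{z \in \Delta : \lambda_j(z) = b_j,\ j \in S\}$ with $S \subseteq S_x$; more carefully, every face admits at least one such representation with some $S \subseteq J$, and if the face contains $x$ then necessarily $S \subseteq S_x$, which is all you need to conclude $G \subseteq F'$. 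With that phrasing tightened, the argument is complete and self-contained.
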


%

\begin{lemma}\label{minimal} Let $\Delta$ be a polyhedron, $x$ a point in $\Delta$, $F$ the minimal face containing $x$, and $V$ an affine space containing $x$. Then $F \cap V$ is the minimal face of $\Delta \cap V$ containing $x$. \end{lemma}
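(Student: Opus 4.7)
The plan is to leverage the intrinsic characterization of the minimal face provided by the first lemma: the minimal face of a polyhedron $\Delta$ containing $x$ is the unique face $F$ of $\Delta$ such that for every $y \in F$ there exists $\epsilon > 0$ with $x - \epsilon (y-x) \in F$. If I can show that (i) $F \cap V$ is a face of $\Delta \cap V$ and (ii) $F \cap V$ satisfies this characterization for $\Delta \cap V$, I will be done.

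For step (i), I would argue as follows. A face $F$ of $\Delta$ arises as $F = \Delta \cap H$ for some supporting hyperplane $H$ (or $F = \Delta$). Then $F \cap V = (\Delta \cap V) \cap H$. Since $\Delta$ lies in one closed half-space bounded by $H$, so does $\Delta \cap V$; hence $H$ is either a supporting hyperplane of $\Delta \cap V$, in which case $F \cap V$ is a face, or $\Delta \cap V \subseteq H$, in which case $F \cap V = \Delta \cap V$ is trivially a face. Either way, $F \cap V$ is a face of $\Delta \cap V$ containing $x$.

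For step (ii), I take any $y \in F \cap V$. Because $y \in F$ and $F$ is the minimal face of $\Delta$ containing $x$, the first lemma yields $\epsilon > 0$ with $x - \epsilon(y-x) \in F$. On the other hand, both $x$ and $y$ lie in the affine space $V$, so the entire line through $x$ and $y$ lies in $V$, and in particular $x - \epsilon(y-x) \in V$. Combining, $x - \epsilon(y-x) \in F \cap V$, which is exactly the condition identifying $F \cap V$ as the minimal face of $\Delta \cap V$ containing $x$.

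I expect the only delicate point to be step (i), where one has to confirm that cutting a face of $\Delta$ by an affine subspace through $x$ does produce a genuine face of $\Delta \cap V$ (rather than, say, a lower-dimensional piece that is not exposed). The supporting hyperplane argument handles this cleanly, but one should be careful about the degenerate case $V \subseteq H$. Step (ii) is then essentially a line-segment computation that uses nothing beyond the affineness of $V$ and the minimality of $F$.
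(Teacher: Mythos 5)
Your proposal is correct and takes essentially the same route as the paper, which disposes of the lemma in one sentence by observing that the intrinsic characterization from the preceding lemma is preserved under restriction to $V$. The only thing you add is the explicit verification (your step (i)) that $F \cap V$ really is a face of $\Delta \cap V$, a point the paper leaves implicit; your supporting-hyperplane argument for it is fine, including the degenerate case $V \subseteq H$.
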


\begin{proof} The condition of the previous lemma is clearly preserved by restriction to an affine subspace. \end{proof}

\begin{defin} Let $f: \mathbb R^n \to \mathbb R^m$ be a linear map. Let $\Delta$ be a fan in $\mathbb R^n$. Define the projection of $\Delta$ along $f$ to be the set of cones generated by faces and intersections starting from the projection of each cone in $\Delta$ along $f$. \end{defin}

\begin{lemma}\label{projection} If a fan $\Delta$ has finitely many cones, all rational polyhedral, then its projection along any linear map has finitely many cones, all rational polyhedral. \end{lemma}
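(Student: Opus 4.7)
The plan is to handle the result through the combinatorics of a hyperplane arrangement. First I would check the easy half: the image $f(\sigma)$ of a rational polyhedral cone $\sigma$ is itself rational polyhedral, because if $\sigma$ is generated by rational rays $v_1,\dots,v_k$, then $f(\sigma)$ is generated by the rational rays $f(v_1),\dots,f(v_k)$. Applying this to each cone of $\Delta$ produces a finite family $C_1,\dots,C_N$ of rational polyhedral cones in $\R^m$, and the projection of $\Delta$ along $f$ is by definition the closure of this family under taking faces and pairwise intersections.

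The core of the proof is to realize this closure inside a finite rational polyhedral fan coming from a hyperplane arrangement. Each $C_i$ is cut out by finitely many rational half-spaces $\{\ell_{i,j} \ge 0\}$; collect all the linear forms $\ell_{i,j}$ into a finite set $\mathcal L$ and consider the arrangement of hyperplanes $\{\ell = 0\}_{\ell \in \mathcal L}$. To each sign vector $\epsilon : \mathcal L \to \{+,-,0\}$ attach the closed cone
$$
C_\epsilon = \{ x \in \R^m : \ell(x) \ge 0 \text{ if } \epsilon(\ell) = +,\ \ell(x) \le 0 \text{ if } \epsilon(\ell) = -,\ \ell(x) = 0 \text{ if } \epsilon(\ell) = 0\}.
$$
These finitely many rational polyhedral cones form a fan: the faces of $C_\epsilon$ correspond to replacing some $\pm$ entries by $0$, and $C_\epsilon \cap C_{\epsilon'} = C_{\epsilon''}$ where $\epsilon''$ agrees with $\epsilon$ and $\epsilon'$ where they agree and equals $0$ where they disagree, so intersections are indeed faces of each.

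It then remains to observe that each $C_i$ is itself some $C_{\epsilon_i}$, so every face or pairwise intersection of the $C_i$ is a union of certain $C_\epsilon$'s contained in $\bigcup_i C_i$. Iterating faces and intersections therefore never escapes the finite collection $\{C_\epsilon\}$, and the projection of $\Delta$ is precisely the subfan of the arrangement supported on $\bigcup_i C_i$. Finiteness and rational polyhedrality of all cones follow immediately.

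The step I expect to require the most care is the fan axiom that the intersection of two cones is a face of each; in a naive inductive approach one has to show that intersections stabilize and fit together coherently, which is cumbersome. The hyperplane arrangement framework handles this cleanly because the sign-vector calculus makes both the face and the intersection operations transparent, and automatically guarantees the fan axioms. The only remaining bookkeeping is to check that the arrangement-based description matches the inductive definition, which is routine since each $C_\epsilon \subset \bigcup_i C_i$ is obtained from finitely many face and intersection operations on the $C_i$.
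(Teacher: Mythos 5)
Your hyperplane-arrangement framework is a genuinely different route from the paper's (the paper runs a downward induction on dimension, passing from top-dimensional cones to their faces and intersections and ``so on''), and the instinct behind it is sound: embed the iteration into a finite ambient combinatorial structure so that termination is automatic. However, the proof as written has a real gap at the step you yourself flag as ``remaining to observe''. The claim that each projected cone $C_i$ equals some single cell $C_{\epsilon_i}$ of the arrangement is false in general. The cone $C_i$ is cut out only by the inequalities $\ell_{i,j}\ge 0$ for its \emph{own} defining forms; it places no constraint at all on the other forms in $\mathcal L$ coming from the other $C_k$'s, and so it typically straddles the hyperplanes $\{\ell=0\}$ for those $\ell$. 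Hence $C_i$ is a union of several cells $C_\epsilon$, not a single one. For a concrete instance, take $C_1=\{\ell_1\ge 0\}$ and $C_2=\{\ell_2\ge 0\}$ in $\R^2$ with $\ell_1,\ell_2$ independent; then $\mathcal L=\{\ell_1,\ell_2\}$ and $C_1=C_{(+,+)}\cup C_{(+,0)}\cup C_{(+,-)}$. Consequently the collection obtained by iterating faces and intersections starting from the $C_i$ need not lie inside $\{C_\epsilon\}$ --- already the starting cones $C_i$ escape it --- so the assertion that the iteration ``never escapes the finite collection $\{C_\epsilon\}$'' does not follow, and neither does the identification of the projection with a subfan of the arrangement.

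The idea is repairable, but by a weaker and different bookkeeping than locating everything inside $\{C_\epsilon\}$. Observe instead that every cone produced by iterated faces and intersections, starting from the $C_i$, is cut out by a system of constraints of the form $\ell\ge 0$, $\ell\le 0$, or $\ell=0$ with $\ell\in\mathcal L$: this holds for the $C_i$ themselves; it is preserved under intersection (just concatenate the defining systems); and it is preserved under passing to a face, since a face of a polyhedral cone given by such an $H$-description is obtained by upgrading some of the inequalities to equalities. There are at most $4^{|\mathcal L|}$ such cones (each $\ell$ is constrained by $\ge$, $\le$, $=$, or not at all), so the generated collection is finite, and rationality of the $\ell$'s gives rational polyhedrality. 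This repaired version still benefits from your clean arrangement set-up and, I would say, is tighter than the paper's own ``and so on'' induction, but the step ``$C_i=C_{\epsilon_i}$'' as you wrote it is incorrect and needs to be replaced.
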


\begin{proof} Each cone of the largest possible dimension in the projected fan is the intersection of projections of cones from the original fan. There are finitely many cones, hence finitely many projections, and finitely many intersections. Furthermore, all of these cones, being intersections of rational polyhedral cones, are rational polyhedral. Then the cones of the second-largest dimension are intersections of faces of these intersections with images of cones, which must also be finite, because a rational polyhedral cone has finitely many faces, and rational polyhedral, because a face of a rational polyhedral cone is rational polyhedral, and so on. \end{proof}

Let now $X$ be a normal projective variety admitting a flag $Y_\bullet$ such that the global Okounkov body $\Delta_{Y_{\bullet}}(X)$ is rational polyhedral. 

\begin{notation} We denote by $pr_{2* } \Delta_{Y_{\bullet}}(X)$ the projection onto $N^1(X)_\R$ of the fan of all faces of $\Delta_{Y_{\bullet}}(X)$. \end{notation}

\begin{lemma}\label{pair} Let $D_1$ and $D_2$ be two divisors that lie in a cone $C$ of $pr_{2*}\Delta_{Y_{\bullet}}(X)$. If $C$ is the minimal cone containing $aD_1+bD_2$, then $\Deltanum(a D_1 + b D_2) = a \Deltanum(D_1) + b \Deltanum(D_2)$. \end{lemma}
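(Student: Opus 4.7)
The plan is to prove the reverse inclusion $\Deltanum(aD_1+bD_2) \subseteq a\Deltanum(D_1) + b\Deltanum(D_2)$ via a structural description of the fibers over the cone $C$; the other inclusion is immediate from the convexity of the global Okounkov body $\Delta_{Y_\bullet}(X)$, since $a(v_1,D_1) + b(v_2,D_2) \in \Delta_{Y_\bullet}(X)$ whenever $(v_i,D_i) \in \Delta_{Y_\bullet}(X)$.

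The structural claim I will establish is that there exist linear maps $\phi_1,\dots,\phi_k$ on the linear span of $C$, taking values in $\R^n$, such that
$$
\Deltanum(D) = \mathrm{conv}(\phi_1(D),\dots,\phi_k(D)) \quad \text{for every } D \in C.
$$
From this, the lemma follows by a brief computation: applying the formula at $D_1$, $D_2$, and $aD_1+bD_2$, using linearity of the $\phi_j$'s together with the standard Minkowski identity $\mathrm{conv}(A)+\mathrm{conv}(B) = \mathrm{conv}(A+B)$ for finite sets, we obtain
\begin{align*}
\Deltanum(aD_1+bD_2) &= \mathrm{conv}\bigl(a\phi_j(D_1)+b\phi_j(D_2)\bigr)_j \\
&\subseteq \mathrm{conv}\bigl(a\phi_i(D_1)+b\phi_j(D_2)\bigr)_{i,j} \\
&= a\Deltanum(D_1)+b\Deltanum(D_2),
\end{align*}
which combined with the $\supseteq$ inclusion gives the equality.

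The hard part will be establishing the structural claim, and this is where the specific construction of the projected fan $pr_{2*}\Delta_{Y_\bullet}(X)$ is essential. First I would prove the formula on $\mathrm{relint}(C)$: by construction of the projected fan, any change in the combinatorial type of the fiber $\Deltanum(D)$ corresponds to $D$ crossing a wall built from projections of faces of $\Delta_{Y_\bullet}(X)$, and such walls are precisely what the fan's refinement procedure forbids from lying strictly inside a cone. Hence the set of active defining inequalities of $\Delta_{Y_\bullet}(X)$ cutting out the slice is locally constant on $\mathrm{relint}(C)$, so the vertices of $\Deltanum(D)$ are solutions of fixed linear subsystems and depend linearly on $D$. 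I would then extend the formula to all of $C$ by continuity: $\Deltanum(D)$ varies continuously in $D$ in the Hausdorff metric, since $\Delta_{Y_\bullet}(X)$ is closed with bounded fibers, and linear maps are continuous. The hypothesis that $C$ is the minimal cone containing $aD_1+bD_2$ enters precisely at this step: it guarantees $aD_1+bD_2 \in \mathrm{relint}(C)$, so that $\mathrm{relint}(C)$ is non-empty and provides the anchor for the continuity argument.
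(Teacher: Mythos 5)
Your overall architecture is sound: the easy inclusion follows from convexity, and if the structural claim $\Deltanum(D) = \mathrm{conv}(\phi_1(D),\dots,\phi_k(D))$ held for a fixed collection of linear maps $\phi_j$ on all of $C$, the Minkowski identity would drop out exactly as you compute. But the structural claim is where all the content lives, and your proof of it is not really a proof. The assertion that ``any change in the combinatorial type of the fiber corresponds to $D$ crossing a wall built from projections of faces of $\Delta_{Y_\bullet}(X)$, and such walls are precisely what the fan's refinement procedure forbids from lying strictly inside a cone'' is a statement of the intuition behind the construction of $pr_{2*}\Delta_{Y_\bullet}(X)$, not a deduction from the paper's definition (``the set of cones generated by faces and intersections starting from the projection of each cone''). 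You would need to actually prove: (a) on $\mathrm{relint}(C)$, the set of faces $F$ of $\Delta_{Y_\bullet}(X)$ contributing a vertex to the fiber is constant and each such contribution is $pr_2|_F^{-1}(D)$, hence linear in $D$; and (b) the resulting vertex formula extends to $\partial C$. Step (b) is presented as a consequence of Hausdorff continuity of the fibers, which you also assert rather than prove; for a closed polyhedral cone with bounded fibers this is true, but it is a nontrivial claim about lower semicontinuity of the fiber map at the boundary of the image and deserves an argument. Finally, your remark about where the hypothesis ``$C$ is the minimal cone containing $aD_1+bD_2$'' enters is off: $\mathrm{relint}(C)$ is non-empty for any cone of the fan, independently of that hypothesis, and in fact if your structural claim held on all of $C$ the lemma would follow with no minimality assumption at all. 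So either the structural claim fails on $\partial C$ in general, or you are proving something stronger than stated; in either case the role of the hypothesis in your argument is not what you say it is.

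For comparison, the paper's proof avoids the structural claim entirely and genuinely uses the minimality hypothesis. It fixes an extremal point $x$ of $\Deltanum(aD_1+bD_2)$, takes the minimal face $F$ of $\Delta_{Y_\bullet}(X)$ containing $x$, and uses minimality of $C$ (so that $aD_1+bD_2 \in \mathrm{relint}(C)$) to conclude $pr_2(F) \supseteq C$. This gives points $y_i \in F$ over $D_i$, and then Lemma~\ref{minimal} plus extremality of $x$ forces $ay_1+by_2 = x$. This is more elementary, needs only the already-established facts about minimal faces, and localizes the work at individual extremal points rather than requiring a global linear parametrization of the fiber. Your route is not wrong, and if you fully proved the structural claim (which is essentially the chamber-linearity statement underlying fiber-polytope constructions) it would yield a stronger result and make the deduction of Theorem~\ref{ThA} almost immediate; but as written the key step is unsupported.
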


\begin{proof} 
Let $x$ be an extremal point of $\Deltanum( a D_1 + b D_2) = \Delta_{Y_\bullet}( X) \cap pr_2^* (aD_1+bD_2)$. Let $F$ be the minimal face of $\Delta_{Y_{\bullet}}(X)$ containing $x$. Then the image of $F$ under $pr_2$ intersects $C$, and from the minimality of $C$ it follows that the image of $F$ under $pr_2$ contains $C$. Hence there are also points  $y_1$ in $ \Delta_{Y_\bullet}( X) \cap pr_2^* (D_1)$ and $y_2$ in $ \Delta_{Y_\bullet}( X) \cap pr_2^* (D_2)$ lying in $F$. Then $a y_1 + by_2$ lies in $F \cap pr_2^* (a D_1 + b D_2)$. Because $F$ is the minimal face containing $x$, by Lemma \ref{minimal}, $F \cap pr_2^* (a D_1 + b D_2)$ is the minimal face of $ \Delta_{Y_\bullet}( X) \cap pr_2^* (aD_1+bD_2)$ containing $x$. Because $x$ is an extremal point, this implies that $F \cap pr_2^* (a D_1 + b D_2) = \{ x\}$, so  $ay_1 + b y_2 = x$. 
 
Hence all the extremal points of $\Deltanum(a D_1 + b D_2)$ lie in $a \Deltanum(D_1) + b \Deltanum(D_2)$, which implies that the whole convex body lies in $a \Deltanum(D_1) + b \Deltanum(D_2)$. The converse is trivial.
\end{proof}


\begin{proof}[Proof of Theorem \ref{ThA}] We take $D_i$ to be the generators of the one-dimensional cones of $pr_{2*} \Delta_{Y_\bullet}(X)$ and take $\Delta_i$ to be $\Delta_{Y_{\bullet}}(D_i)$. By Lemma \ref{projection} there indeed a finitely many $D_i$, and the $\Delta_i$ are indeed rational polytopes.

To finish the result it is sufficient to prove that, for each divisor $D$, we can write $D$ as a linear combination of the $D_i$, such that the numerical Newton--Okounkov body of $D$ is the weighted Minkowski sum of the corresponding numerical Okounkov bodies $\Delta_i$. Then for a big divisor $D$, we will obtain the same identity for the usual Newton-Okounkov body of $D$, because it is equal to the numerical one.

We prove this by induction on the dimension of the minimal cone of $pr_{2*} \Delta_{Y_\bullet}(X)$ containing $D$.  It is clearly true if $D$ is contained in a one-dimensional cone. For arbitrary $D$, take a line through $D$, and let $D_a$ and $D_b$ be the points where it intersects the boundary of the minimal cone containing $D$. We may write $D$ as a linear combination $a D_a + bD_b$, and by Lemma \ref{pair}, $\Deltanum(a D_a + b D_b) = a \Deltanum(D_a) + b \Deltanum(D_b)$. Because $D_a$ and $D_b$ lie in the boundary of the minimal cone containing $D$, they lie in some face of the minimal cone containing $D$, and hence in a lower-dimensional cone. So by induction we may write them as a linear combination of $D_i$ and write their numerical Newton-Okounkov bodies as a linear combination of the $\Delta_i$. Thus we can do the same for $D$, verifying the induction step. \end{proof}

\section{Numerical Newton--Okounkov bodies}

The numerical Okounkov bodies defined above are just ordinary Okounkov bodies 
in the case of big divisors. However, on the boundary of the pseudo-effective cone,
the Newton--Okounkov body is not a numerical invariant. Even the Kodaira dimension is not invariant.
Nonetheless, notions of numerical Kodaira dimension have been studied that 
are numerical invariants, even on the boundary. We refer the reader to the discussion in 
\cite{lehmann} and \cite{eckl}. We follow the definition from \cite{eckl}.
\begin{defin}\label{def:nu}
	Let $D$ be a pseudo-effective divisor and let $A$ be some fixed ample divisor.
	The numerical Kodaira dimension of $D$ is
	$$
		\nu(D):=\max\left\{ k\in \N\mid \exists\  C\in \R \mbox{ such that } 
		\vol_X(D-tA)>t^{\dim X-k}  \right\}.
	$$
\end{defin}
Since the volume of $D-tA$ only depends on the numerical class
 for each $t$, the numerical Kodaira dimension $\nu(D)$ is a numerical invariant.
 Let us prove Theorem \ref{ThB} stating that $\nu(D)$ agrees with the dimension 
 of the numerical Newton--Okounkov body $\Delta_{Y_\bullet}^{\num}(D)$ for any 
 admissible flag $Y_\bullet$ for which the global Okounkov body is polyhedral around 
 the fiber of $D$. 
 
 We use the following property of projections of cones.
 \begin{lemma}\label{l:bounded}
 	Let $f:\R^{N}\to \R^n$ be a linear map and let  $\mathcal C\subset \R^N$ be a 
 	closed convex cone. Furthermore, let $R\subset f(\mathcal C)$ be a 
 	subcone and assume that around $f^{-1}(R)$ the cone $\mathcal C$ is polyhedral. Denote
 	by $d_N, d_n$ Euclidean distance in $\R^N, \R^n$, respectively.
 	Then the function
 	\begin{eqnarray*}
 		\rho: \mathcal C\setminus f^{-1}(R) &\to& \R\\
 		x &\longmapsto& \frac{d_N(x,f^{-1}(R)}{d_n(f(x),R)}
 	\end{eqnarray*}
 	 is bounded.
 \end{lemma}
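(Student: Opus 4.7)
The plan is to combine the scale-invariance coming from the cone structure with Hoffman's lemma on stability of polyhedral systems, the latter being available thanks to the polyhedrality hypothesis.

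First I would observe that both the numerator and denominator in the definition of $\rho$ are homogeneous of degree one in $x$: since $\mathcal C$, $R$, and $f^{-1}(R)$ are cones and $f$ is linear, scaling $x$ by $\lambda > 0$ scales both quantities by $\lambda$. Hence $\rho$ is homogeneous of degree zero, and it suffices to bound it on the compact slice $S := \{x \in \mathcal C : \|x\| = 1\}$. On $S \setminus f^{-1}(R)$ the function $\rho$ is continuous (ratio of continuous functions with non-vanishing denominator), so by compactness it is automatically bounded on any closed subset of $S$ that avoids a fixed neighborhood of $f^{-1}(R)$. The substance of the lemma therefore reduces to controlling $\rho$ in a neighborhood of $f^{-1}(R)$.

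Since $R$ is a subcone of $f(\mathcal C)$ we have $0 \in f^{-1}(R)$, and the hypothesis that $\mathcal C$ is polyhedral in a neighborhood of $f^{-1}(R)$ lets us produce a (rational) polyhedral cone $\tilde{\mathcal C} = \{y \in \R^N : Ay \leq 0\}$ that agrees with $\mathcal C$ in an open neighborhood $U$ of $f^{-1}(R)$; in particular $\tilde{\mathcal C} \cap f^{-1}(R) = \mathcal C \cap f^{-1}(R)$. For each $r' \in R$ the affine polyhedron
$$
P_{r'} := \{y \in \R^N : Ay \leq 0,\ f(y) = r'\}
$$
is non-empty because $R \subset f(\mathcal C)$, and the coefficient matrix of its defining (in)equalities is independent of $r'$. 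Hoffman's lemma on the stability of polyhedral systems then yields a single constant $K$, depending only on $A$ and $f$ (and crucially not on $r'$), such that $d_N(x, P_{r'}) \leq K \|f(x) - r'\|$ for every $x \in \tilde{\mathcal C}$ and every $r' \in R$. Taking the infimum over $r' \in R$ and using $\bigcup_{r' \in R} P_{r'} = \tilde{\mathcal C} \cap f^{-1}(R) = \mathcal C \cap f^{-1}(R) \subset f^{-1}(R)$ yields $d_N(x, f^{-1}(R)) \leq K \, d_n(f(x), R)$ on $\mathcal C \cap U$. Combined with the compactness bound on $S \setminus U$, this gives the desired uniform bound on $\rho$.

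The step I expect to be the main obstacle is the uniformity of $K$ in $r'$; this is exactly the feature of Hoffman's lemma that its constant depends only on the coefficient matrix of the polyhedral system and not on its right-hand side, which is what lets us take the infimum over $r'$ without losing the estimate. The polyhedrality hypothesis is essential here: for a general non-polyhedral convex cone the boundary of $\mathcal C$ can become tangent to the fibers of $f$ in a higher-order manner, forcing $\rho$ to blow up as one approaches $f^{-1}(R)$.
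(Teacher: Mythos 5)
Your proof is correct, and the overall skeleton (scale-invariance, compactness away from $f^{-1}(R)$, a separate argument near $f^{-1}(R)$) matches the paper. But the decisive near-$f^{-1}(R)$ step is genuinely different. The paper argues directly with the geometry of the cone: after reducing to the compact quotient $Y$, it shows that $\rho$ is (in the limit) constant along rays emanating from $f^{-1}(R)$, and then uses polyhedrality to guarantee that for $U$ small enough every point of $U\setminus f^{-1}(R)$ lies on such a ray whose other end is in the compact set $Y\setminus U$, transferring the bound. You instead replace $\mathcal C$ near $f^{-1}(R)$ by a polyhedral cone $\tilde{\mathcal C}=\{Ay\le 0\}$, view $\tilde{\mathcal C}\cap f^{-1}(R)$ as the union of the fibers $P_{r'}$, and invoke Hoffman's error bound to get $d_N(x,P_{r'})\le K\|f(x)-r'\|$ with $K$ uniform in the right-hand side $r'$, then take the infimum over $r'\in R$. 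This buys rigor and efficiency: the constant $K$ is produced in one blow, and you sidestep entirely the delicate assertion in the paper that $d_N(x+ty,f^{-1}(R))=d_N(ty,f^{-1}(R))$ for $x\in f^{-1}(R)$ and small $t$, which is not literally an identity (it holds only in the relative interior of $f^{-1}(R)$ and to leading order) and requires some care to make precise. The trade-off is that your proof is less self-contained, since it outsources the polyhedral input to Hoffman's lemma rather than extracting it by hand as the paper does. Two small remarks: the parenthetical ``(rational)'' is unnecessary --- rationality plays no role here; and it is worth being explicit that the neighborhood $U$ may be taken scale-invariant, so that ``$\mathcal C$ agrees with $\tilde{\mathcal C}$ on $U$'' and the cone structure of both sides are compatible, which you implicitly use when concluding $P_{r'}\neq\emptyset$ from $R\subset f(\mathcal C)$.
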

 \begin{proof}
 	The preimage $f^{-1}(R)$ is again a cone in $\R^N$. Thus for any $\lambda\ge 0$, 
 	$x\in\mathcal C$ we have 
 	$$
 		d_N(\lambda x, f^{-1}(R)) = d_N(\lambda x, \lambda f^{-1}(R)) =
 		\lambda\cdot d_N(x, f^{-1}(R)),
 	$$
	By the same token and linearity of $f$,
 	$$
 		d_n( f(\lambda x), R) = 	\lambda\cdot d_n(f(x), R),
 	$$
 	so the function $\rho$ is invariant under scaling with positive reals. 
 	We can thus assume it to be a function on 
 	the quotient of $ (\mathcal C\setminus f^{-1}(R))$ by the scaling action of the multiplicative group of positive reals, which we call $Y$. 
	
 	Note that for any $x\in f^{-1}(R)$, $y\in \R^N$, $0<t\ll1$ we have 
 	\begin{eqnarray*}
 		\rho(x+ty) &=& \frac{d_N(x+ty,f^{-1}(R))}{d_n(f(x+ty),R)}\\
 				&=& \frac{d_N(ty,f^{-1}(R))}{d_n(f(x)+tf(y)),R)}\\
 				&=& \frac{td_N(y,f^{-1}(R))}{td_n(f(y),R)}=\rho(y).\\
 	\end{eqnarray*}
 	Thus $\rho$ is constant along each ray from $x\in f^{-1}(R)$. Let 
 	$U\subset Y$ be
 	a scale-invarinat open neighborhood of $f^{-1}(R)$. Then $\rho$ is continuous on the compact set 
 	$Y\setminus U$, and thus is bounded on $Y \setminus U$. On the other hand choosing $U$ 
	small enough, all points in $U\setminus f^{-1}R$ lie in some ray $\{x-ty\}$ for
 	 $x\in f^{-1}(R)$ and $y\in Y \setminus U$ by the polyhedrality of $\mathcal C$ around 
 	 $f^{-1}(R)$, 
 	 thus their values under $\rho$ are 
 	 bounded by the maximum on $Y\setminus U$.
 \end{proof}

\begin{proof}[Proof of Theorem \ref{ThB}]
	Let $Y_\bullet$ be an admissible flag on $X$ and $\Delta(X)$ the corresponding 
	global Okounkov body in $\R^N=N^1(X)_\R\times\R^d$ and let $D$ be a 
	pseudo-effective divisor and $A$ an ample divisor on $X$.  
	Let us denote by $\Delta_\epsilon$ the full-dimensional standard simplex in 
	$\R^N$ of side-length $\epsilon$.
	It suffices to prove the existence of two positive constants $C_1,C_2$ such that for 
	all $0<t\ll1$ we have
	$$
		\Deltanum(D)+\Delta_{t\cdot C_1}\subseteq \Delta_{Y_\bullet}(D+tA)
		\subseteq \Deltanum(D)+\Delta_{t\cdot  C_2},
	$$
	since this clearly implies the existence of positive constants $c_1,c_2$ with
	$$
		c_1t^{n-r}<\vol(D+tA)<c_2 t^{n-(r+1)}
	$$
	for all $0<t\ll1$, where $r$ denotes the dimension of $\Deltanum(D)$. (It is even sufficient to show these inclusions up to translation, which is what we will do.)
	For the second inclusion take in Lemma \ref{l:bounded} for $f$ the projection  
	$\R^N\to N^1(X)_\R$, $\mathcal C=\Delta(X)$, and $R$ the ray in $\Eff(X)$ spanned by
	$D$. Then the boundedness of $\rho$ implies that there is a constant $C$ such that 
	for small $t$ every point of $\Delta(D+tA)$ has distance at most $Ct$ from 
	a point in $\Deltanum(D)$. We can choose $C_2$ large enough that $\Delta_{t \cdot C_2}$ contains a sphere of radius $Ct$ so that $\Deltanum(D)+\Delta_{t\cdot  C_2}$ contains $\Delta_{Y_\bullet}(D+tA)$.
	
	For the first inclusion, the convexity of the cone $\Delta(X)$ yields 
	the inclusion of numerical Okounkov bodies (fibers under the projection to $N^1(X)_\R$)
	$$
		\Deltanum(D)+\Deltanum(tA) \subseteq \Deltanum(D+tA),
	$$ 
	so by the bigness of $A$ and $D+tA$, we get
	$$
		\Deltanum(D)+\Delta_{Y_\bullet}(tA) \subseteq \Delta_{Y_\bullet}(D+tA).
	$$
	Recall that for any ample divisor $A$ the Newton--Okounkov body 
	$\Delta_{Y_\bullet}(A)$ contains a simplex $\Delta_\epsilon$ (see \cite[Theorem B]{kl15}).
	Therefore, the left hand side above contains the Minkowski sum
	$
		\Deltanum(D)+\Delta_{t\cdot\epsilon},
	$
	which concludes the proof.
\end{proof}
	
\begin{remark}
	The above proof shows that the inequality
	$$
		\dim\Deltanum(D) \le \nu(D)
	$$
	holds regardless of the shape of $\Delta(X)$. This has already been observed by 
	Boucksom in \cite[Lemme 4.8]{Bou}.  
	In particular, if $D$ is not big and its numerical Newton--Okounkov body has dimension 
	dim$(X)-1$, then the numerical Kodaira dimension must agree. 
\end{remark}



\end{document}